\newtheorem{theorem}{Theorem}[section]
\newtheorem{lemma}[theorem]{Lemma}
\newtheorem{remark}[theorem]{Remark}
\theoremstyle{definition}
\newcommand{\Z}{\mathbb Z}
\begin{document}
\lstset{language=C++,basicstyle=\footnotesize, numbers=left, numberstyle=\tiny}

\title{Arcs in $\Z^2_{2p}$}

\author[Z. St\c{e}pie\'{n}]{Zofia St\c{e}pie\'{n}}
\address{
School of Mathematics, West Pomeranian University of Technology\\
al. Piast\'{o}w 48/49, 70-310 Szczecin, Poland}
\email{stepien@zut.edu.pl}

\author[L. Szymaszkiewicz]{Lucjan Szymaszkiewicz}
\address{
Institute of Mathematics, Szczecin University\\ 
Wielkopolska 15, 70-451 Szczecin, Poland
}
\email{lucjan.szymaszkiewicz@usz.edu.pl}

\subjclass[2000]{05B99}

\keywords{No-three-in-line problem; Arc}

\begin{abstract}
An arc in $\Z^2_n$ is defined to be a set of points no three of which are collinear.
We describe some properties of arcs and determine the maximum size of arcs for 
some small $n$.
\end{abstract}

\maketitle

\section{Introduction}

The no-three-in-line problem 
is an old question (see \cite{Dud}) which
asks for the maximum number of points that can be placed in the 
$n \times n$ grid with no three points collinear. 
This question has been widely studied (see \cite{E}, \cite{F92}, \cite{F98}, \cite{GK} and \cite{HJSW}),
but is still not resolved.

In this paper we consider the no-three-in-line problem 
over the $\Z_n^2$.
This modified problem is still interesting and was investigated in 
\cite{Huizenga}, \cite{Kurz09} and \cite{T1}.
Many authors considered arcs in the context of projective geometry, see e.g. \cite{Bose}, \cite{Segre}
and \cite{Hirschfeld} for further reference, and in a context of Hjelmslev geometry, see e.g. \cite{Honold01}, \cite{Honold05}, \cite{Kurz11} and \cite{Kleinfeld} for definition of abstract Hjelmslev plane.

Four integers $a,$ $b,$ $u,$ $v$ with $\gcd(u,v)=1$
correspond to the line $\left\{(a+u k, b+v k):k\in\Z  \right\}$ on $\Z^2$.
We define lines in $\Z^2_n$ to be images, by natural projection, of lines in the $\Z^2$ (see \cite{T1}).
We say that points $a_1,\ldots,a_k$ are in line on $\Z^2_n$ (or collinear) if there exists a line $l$
in $\Z^2_n$ such that $a_1,\ldots,a_n\in l$.
We would like to remark that one could also define a line as a coset of a (maximal) cyclic subgroup of $\Z^2_n$
(see \cite{Huizenga}, \cite{Kurz09}). 

If $n=p$ is a prime 
the resulting space is just $\mathrm{AG}(2,p)$ where 
every two distinct lines intersect in either $0$ or $1$ point. 
In contrast, for arbitrary $n$ it may happen that two distinct lines meet each other in more than a single point.
 
We call $X\subset \Z^2_n$ an arc if it does not contain any three collinear points, and we call
$X$ complete if it is maximal with respect to the set-theoretical inclusion.
We denote by $\uptau(\Z^2_n)$ the maximum possible size of an arc in $\Z^2_n$.

Kurz \cite{Kurz09} uses the integer linear programming to determine numbers $\uptau(\Z^2_n)$.
He identifies values of $\uptau(\Z^2_n)$ for $n \leq 21$ and, after transformations, the value of $\uptau(\Z^2_{25})$  (see also \cite{Kurz11} for this case). 
We were interested in finding the lacking values of $\uptau(\Z^2_{22})$ and $\uptau(\Z^2_{24})$.
Our first try was using the mathematical programming solver Gurobi \cite{Gurobi}.
We succeed with computing $\uptau(\Z^2_{24})$ but were not able to determine $\uptau(\Z^2_{22})$ 
in this way. Therefore, we investigated some properties of arcs in $\Z^2_{2p}$ for $p$ prime,
which allowed us to compute $\uptau(\Z^2_{22})$.

%%%%%%%%%%%%%%%
%  NOTATION   %
%%%%%%%%%%%%%%%

We will use the following notation.
Denote by $\pi_n$ the natural projection from $\Z$ to $\Z_n$.
If the context is clear, we omit the index $n$. 
For $u=(u_x,u_y)$ we write $\pi(u)$ for $(\pi(u_x), \pi(u_y))$. 
If $p|n$ then by $\phi_p$, we denote the projection $\phi_p:\Z_n \to \Z_p$ 
defined by  $\phi_p \circ \pi_n = \pi_p$.
Similarly, we write $\phi_p(u)$ for $(\phi_p(u_x), \phi_p(u_y))$.

By automorphism  of $\Z_{n}^{2}$ we mean  mapping from $\Z_{n}^{2}$ to $\Z_{n}^{2}$
which preserves arcs. The group formed by these elements is denoted by $\mathcal G_{n}.$
Let $GL_{2}(\Z_{2p})$ denote the group of invertible $2\times 2$ matrices with coefficients
in $\Z_{2p}$. 
We denote by $f_{A}$ the automorphism of $\Z^{2}_{2p}$ corresponding to the matrix $A$.
Another class of automorphisms are translations. In this paper $t_{v}$
denotes the translation by a vector $v$. 
More explicitly, $t_{(v_x,v_y)}(u_x,u_y)=(u_x+v_x, u_y+v_y)$.

By $D(u,v,w)$ we denote
\[
\begin{vmatrix}
1 & 1 & 1 \\
u_x & v_x & w_x \\
u_y & v_y & w_y \\
\end{vmatrix}
\]
where $u=(u_x, u_y)$, $v=(v_x, v_y)$ and $w=(w_x, w_y)$.

\section{Basic facts}

For a prime $p$ the exact value of $\uptau\left(\Z^{2}_{p}\right)$ is well-known. See, for instance, 
\cite{T1}.
\begin{lemma}\label{L:pp}
Let p be an odd prime, then $\uptau\left(\Z^{2}_{p}\right)=p+1.$
\end{lemma}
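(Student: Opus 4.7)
\emph{Plan of proof.} The argument splits naturally into the upper bound $\uptau(\Z_p^2) \le p+1$ and a matching construction.

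For the upper bound, let $X \subseteq \Z_p^2$ be an arc. Since $p$ is prime, $\Z_p^2 = \mathrm{AG}(2,p)$, so through any fixed $a \in X$ there pass exactly $p+1$ lines, and each contains at most one further point of $X$; hence $|X| \le p+2$. I rule out $|X| = p+2$ by a parity argument. If equality held, each of the $p+1$ lines through $a$ would have to contain exactly one other point of $X$, so every line through a point of $X$ would be a secant. It follows that every line of $\Z_p^2$ meets $X$ in either $0$ or $2$ points. Fix now any $x \notin X$ (such $x$ exists since $p+2 < p^2$ for $p \ge 3$): the $p+1$ lines through $x$ then partition $X$ into disjoint pairs of collinear points, forcing $|X|$ to be even. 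But $p+2$ is odd, a contradiction.

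For the lower bound I exhibit an explicit $(p+1)$-arc. Choose a quadratic non-residue $d \in \Z_p$ (available because $p$ is odd) and set
\[
C = \{(x,y) \in \Z_p^2 : x^2 - dy^2 = 1\}.
\]
The rational map
\[
\varphi(t) = \left(\frac{1+dt^2}{dt^2-1},\; \frac{2t}{dt^2-1}\right), \qquad t \in \Z_p,
\]
is well-defined because $dt^2 - 1$ never vanishes: $dt^2$ is either $0$ or a non-residue, hence never equal to $1$. A direct check shows $\varphi(t) \in C$, that $\varphi$ is injective (if $\varphi(t_1)=\varphi(t_2)$ the equation on the first coordinate forces $t_1=\pm t_2$, and then the second coordinate forces $t_1=t_2$), and that $(1,0)\in C$ lies outside the image of $\varphi$. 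Hence $|C| \ge p+1$. Finally, $C$ is an arc: intersecting $C$ with any affine line and eliminating one variable yields a quadratic equation whose leading coefficient cannot vanish (otherwise $d$ would be a square), so the intersection contains at most two points.

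The substantive step is the parity argument in the upper bound: naive line-counting gives only $|X| \le p+2$, and the strict improvement rests on first deriving the ``$0$ or $2$ points per line'' dichotomy and then reading off the parity of $|X|$ from any point outside $X$. The construction is then routine algebraic manipulation of a non-degenerate conic.
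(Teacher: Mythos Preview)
Your argument is correct. The upper bound via the parity/secant argument is the classical proof that hyperovals do not exist in $\mathrm{AG}(2,q)$ for odd $q$, and your conic construction and its verification are sound: the denominator $dt^2-1$ is indeed nonvanishing, the parametrisation is injective with $(1,0)$ outside its image, and the line--conic intersection reduces to a genuine quadratic because $1-dm^2=0$ would force $d$ to be a square.

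As for comparison with the paper: there is nothing to compare. The paper does not prove Lemma~\ref{L:pp} at all; it merely records the statement as well known and cites \cite{T1}. You have therefore supplied a complete self-contained proof where the paper gives only a reference. If anything, note that the lemma is a very classical fact about ovals in finite affine planes of odd order (going back to Bose and Qvist), so in a paper one would normally just cite it, as the authors do; but as a standalone proof yours is entirely satisfactory.
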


The following bound was proven in \cite{Kurz09}.
\begin{lemma}\label{L:Kurz09}
$\uptau\left(\Z^{2}_{mn}\right)\leq \min\left\{m\cdot\uptau\left(\Z^{2}_{n}\right),n\cdot\uptau\left(\Z^{2}_{m}\right)\right\}$
for coprime integers $m, n>1$.
\end{lemma}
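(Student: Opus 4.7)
By the symmetry between $m$ and $n$, it suffices to prove $\uptau(\Z^2_{mn}) \le n \cdot \uptau(\Z^2_m)$; swapping the roles of $m$ and $n$ gives the other half of the minimum. The plan is to use the CRT decomposition $\Z^2_{mn} \cong \Z^2_m \times \Z^2_n$ (available because $\gcd(m,n)=1$) together with the natural projection $\phi_n: \Z^2_{mn} \to \Z^2_n$. Let $X$ be an arc in $\Z^2_{mn}$, and fix a parallel class of lines $\ell_1, \ldots, \ell_n \subset \Z^2_n$ (e.g., all lines of direction $(1,0)$), so that $\Z^2_{mn} = \bigsqcup_i \phi_n^{-1}(\ell_i)$.

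The central claim is that for every line $\ell \subset \Z^2_n$,
\[
|X \cap \phi_n^{-1}(\ell)| \le \uptau(\Z^2_m).
\]
Summing over the parallel class then gives $|X| \le n \cdot \uptau(\Z^2_m)$, as desired. To prove the claim, set $Y = X \cap \phi_n^{-1}(\ell)$ and let $\pi_m$ be the projection of $\phi_n^{-1}(\ell)$ onto the $\Z^2_m$-factor induced by CRT; I would split into cases according to whether $\pi_m|_Y$ is injective.

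If $\pi_m|_Y$ is injective, I would show $\pi_m(Y)$ is an arc in $\Z^2_m$. Three distinct collinear $q_i \in \pi_m(Y)$ with primitive direction $(u_m,v_m)$ and parameters $k_i^m$, together with $\phi_n(P_i) \in \ell$ collinear via $(u_n,v_n)$ and parameters $k_i^n$, combine through CRT to yield integers $k_i$ and a direction $(u,v)\in\Z^2$ reducing to $(u_m,v_m)$ mod $m$ and $(u_n,v_n)$ mod $n$. The conditions $\gcd(u_m,v_m)=\gcd(u_n,v_n)=1$ plus $\gcd(m,n)=1$ force $\gcd(u,v,mn)=1$, and a Dirichlet-style adjustment of the lift by a multiple of $mn$ (to kill common prime factors outside $mn$) gives $\gcd(u,v)=1$; the resulting line realizes $P_1, P_2, P_3$ as collinear in $\Z^2_{mn}$, contradicting the arc property. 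Hence $|Y| = |\pi_m(Y)| \le \uptau(\Z^2_m)$.

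If $\pi_m|_Y$ is not injective, pick $P_1 \ne P_2$ in $Y$ with $\pi_m(P_1)=\pi_m(P_2)=q_0$. The same lifting rules out any third point of $Y$: one with projection $q_3 \ne q_0$ would be collinear with $P_1,P_2$ in $\Z^2_{mn}$ (take $(u_m,v_m)$ along $q_3-q_0$ and $k_1^m=k_2^m=0$), and a third point with projection $q_0$ (so three distinct $y_i$'s on $\ell$) would also produce a collinear triple. Hence $|Y|\le 2\le \uptau(\Z^2_m)$, completing the claim. The main obstacle is the Dirichlet-style lifting step: given CRT-compatible residues with $\gcd(u,v,mn)=1$, one must actually choose an integer lift with $\gcd(u,v)=1$, which is elementary but needs to be handled carefully (for each prime $p\mid v$ outside $mn$, a single residue class of the shift parameter is forbidden, and the coprimality $\gcd(mn,p)=1$ allows one to avoid all such classes simultaneously).
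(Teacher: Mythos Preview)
The paper does not give its own proof of this lemma; it is simply quoted from Kurz~\cite{Kurz09}. Your argument is correct and is essentially the standard one: partition $\Z^2_n$ into $n$ parallel lines and bound the arc on each preimage $\phi_n^{-1}(\ell)$ by $\uptau(\Z^2_m)$.

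One remark on economy. The CRT/Dirichlet lifting you carry out (producing a primitive integer direction from compatible residues mod $m$ and mod $n$) is exactly the nontrivial direction of the paper's Lemma~\ref{L:Huizenga} (Huizenga). If you invoke that lemma directly, both of your cases collapse: any three points of $Y=X\cap\phi_n^{-1}(\ell)$ already have collinear $\Z^2_n$-projections (they lie on $\ell$), so as soon as their $\Z^2_m$-projections are collinear---which happens whenever $\pi_m(Y)$ fails to be an arc \emph{or} $\pi_m|_Y$ fails to be injective---Lemma~\ref{L:Huizenga} forces them to be collinear in $\Z^2_{mn}$, a contradiction. Thus $\pi_m|_Y$ is injective and $\pi_m(Y)$ is an arc in $\Z^2_m$, giving $|Y|\le\uptau(\Z^2_m)$ in one stroke. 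Your explicit lifting is fine, and your identification of the ``$\gcd(u,v)=1$'' adjustment as the only delicate point is accurate (it is the well-known fact that a pair $(u,v)\in\Z^2$ with $\gcd(u,v,N)=1$ can be shifted by multiples of $N$ to a primitive pair); it is just more work than necessary given what the paper already records.
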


Recall the well-known test to check whether three points are collinear or not.
\begin{lemma}\label{T:det-p}
Points $a, b, c \in \Z^2_p$ for $p$ prime are in a line if and only if $D(a,b,c)=0$.
\end{lemma}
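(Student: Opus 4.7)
The plan is to carry out the classical two-direction argument for the determinant test, exploiting that $\Z_p$ is a field so the usual rank and linear-dependence reasoning is available, and then to be careful to translate an abstract linear dependence back into a line of the form prescribed in the paper (direction vector with coprime integer coordinates).

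For the forward direction, I would start from the definition: if $a,b,c$ lie on the image of $\{(a_0+uk,b_0+vk):k\in\Z\}$, then there exist $k_1,k_2,k_3\in\Z_p$ with $a-\pi(a_0,b_0)=k_1\pi(u,v)$ and analogously for $b,c$. Subtracting the first column of the matrix defining $D(a,b,c)$ from the other two columns leaves the $1$ in the top-left corner, zeros to its right, and a $2\times 2$ block whose two columns are scalar multiples of $\pi(u,v)$. The determinant is therefore zero.

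For the reverse direction, I would first note that expanding along the top row gives that $D(a,b,c)=0$ is equivalent to the $\Z_p$-linear dependence of $b-a$ and $c-a$. If two of the points coincide, collinearity is immediate (any line through them extended to the third point works). Otherwise, writing $c-a=\lambda(b-a)$ in $\Z_p^2$ with $b-a\neq 0$, I would lift $b-a$ to $(\tilde s,\tilde t)\in\Z^2$ and set $d=\gcd(\tilde s,\tilde t)$. Since $(\tilde s,\tilde t)\not\equiv(0,0)\pmod p$, we have $p\nmid d$, so $d$ is invertible in $\Z_p$; putting $(u,v)=(\tilde s/d,\tilde t/d)$ yields an integer vector with $\gcd(u,v)=1$ and $\pi(u,v)=d^{-1}(b-a)$. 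Fixing any integer lift $\tilde a$ of $a$, the line $\{(\tilde a_x+uk,\tilde a_y+vk):k\in\Z\}$ projects to a line that contains $a$ (taking $k\equiv 0$), $b$ (taking $k\equiv d$), and $c$ (taking $k\equiv\lambda d$).

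The main obstacle is not the determinant manipulation, which is routine, but ensuring that the direction vector produced in the reverse direction genuinely has coprime integer coordinates, as demanded by the definition of a line in $\Z_p^2$ adopted in the paper. The key observation making this work is that the gcd of a nonzero lift of $b-a$ is automatically coprime to $p$, so dividing it out is harmless modulo $p$ and yields the required primitive integer direction vector.
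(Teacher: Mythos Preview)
Your argument is correct. The forward direction is the standard column-reduction computation, and in the reverse direction you have handled the one genuine subtlety imposed by the paper's definition of a line: the direction vector must have coprime integer coordinates. Your observation that any integer lift of a nonzero vector in $\Z_p^2$ has $\gcd$ coprime to $p$, so that dividing by this $\gcd$ yields a primitive integer vector whose reduction is a unit multiple of $b-a$, is exactly what is needed to produce a bona fide line in the paper's sense containing all three points.

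As for comparison with the paper: the paper does not actually prove this lemma. It is introduced with the phrase ``Recall the well-known test to check whether three points are collinear or not'' and then simply stated; the authors treat it as standard background for $\mathrm{AG}(2,p)$ and cite \cite{T1} indirectly through the surrounding discussion. So your write-up supplies a proof where the paper gives none, and in particular your careful verification that the resulting line conforms to the paper's specific definition (image of an integer line with primitive direction vector) goes beyond what most textbook statements of this fact bother to check.
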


The following lemma is adapted from Huizenga \cite{Huizenga}.  
\begin{lemma}\label{L:Huizenga}

Let $N = m\cdot n$ with coprime $m$ and $n$. Then any three points in $\Z^{2}_{N}$ are collinear
if and only if both projections onto $\Z^2_m$ and $\Z^2_n$ give a collinear point set.
\end{lemma}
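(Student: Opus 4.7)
The plan is to treat the two directions of the biconditional separately. The forward direction is essentially definitional: if $a,b,c$ lie on a common line $L = \pi_N(L')$ in $\Z^2_N$, where $L' = \{(x_0+uk, y_0+vk) : k\in\Z\} \subset \Z^2$ has $\gcd(u,v)=1$, then $\pi_m(L')$ is by definition a line in $\Z^2_m$ (it uses the same coprime pair $(u,v)$) and it contains $\phi_m(a),\phi_m(b),\phi_m(c)$ as images of three points of $L'$. The same argument handles $\Z^2_n$.

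The converse is the substantive direction. First I would translate so that $a=0$, which is harmless since translations commute with $\phi_m$ and $\phi_n$ and preserve collinearity in all three spaces. The hypothesis then yields $(u_m,v_m)\in\Z^2$ with $\gcd(u_m,v_m)=1$ and integers $\beta_m,\gamma_m$ such that, modulo $m$, $\phi_m(b)=\beta_m(u_m,v_m)$ and $\phi_m(c)=\gamma_m(u_m,v_m)$, together with analogous data $(u_n,v_n,\beta_n,\gamma_n)$ modulo $n$.

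Next I would apply CRT on $\Z_N\cong\Z_m\times\Z_n$ to obtain $\bar u,\bar v\in\Z$ that reduce correctly on both sides, and similarly $\beta\equiv\beta_m\pmod m$, $\beta\equiv\beta_n\pmod n$, and analogously for $\gamma$. The critical ingredient is then to produce lifts $u,v\in\Z$ of $\bar u,\bar v$ satisfying the stronger condition $\gcd(u,v)=1$ in $\Z$; once such $(u,v)$ is in hand, checking on each factor of $\Z^2_m\times\Z^2_n$ shows $\beta(u,v)\equiv b$ and $\gamma(u,v)\equiv c$ in $\Z^2_N$, placing $0,b,c$ on the line $\pi_N\{k(u,v):k\in\Z\}$.

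The main obstacle is this lifting step. The input is $\gcd(\bar u,\bar v,N)=1$, which follows because any prime $p\mid N$ divides exactly one of $m,n$ and cannot divide both $\bar u$ and $\bar v$ (otherwise $p$ would divide $\gcd(u_m,v_m)$ or $\gcd(u_n,v_n)$, both equal to $1$). To upgrade to coprimality in $\Z$, I would replace $\bar u$ by $\bar u+kN$ with $k$ chosen by a small CRT argument to avoid, for each prime dividing $\bar v$ but not $N$, a single forbidden residue. This extra step is required because the paper's definition of a line insists on a coprime direction in $\Z^2$ rather than just modulo $N$, and it is the one place where genuine number theory enters what is otherwise pure CRT bookkeeping.
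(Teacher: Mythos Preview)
The paper does not supply a proof of this lemma; it is quoted with attribution to Huizenga and left unproved. There is therefore no in-paper argument to compare your proposal against.

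Your sketch is correct. The forward direction is immediate from the definition of lines as projections from $\Z^2$, since $\phi_m\circ\pi_N=\pi_m$. For the converse, translating to $a=0$, reading off a coprime direction $(u_m,v_m)$ and parameters $\beta_m,\gamma_m$ on each factor, and gluing by the Chinese Remainder Theorem is the natural approach; once coprime lifts $u,v$ are in hand, the verification that $\beta(u,v)\equiv b\pmod N$ holds coordinatewise modulo $m$ and modulo $n$, hence modulo $N$ by CRT. One small point worth making explicit in the lifting step: choose $\bar v$ to be a nonzero integer representative (if the CRT lift is $0$, replace it by $N$), so that ``primes dividing $\bar v$'' is a finite set and your adjustment $\bar u\mapsto\bar u+kN$ goes through. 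In the boundary case $\bar v\equiv 0\pmod N$ one has $\gcd(\bar u,N)=1$ from $\gcd(\bar u,\bar v,N)=1$, and then $(u,v)=(\bar u,N)$ is already a coprime pair. With this caveat the argument is complete.
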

\begin{theorem}\label{T:det-squarefree}
Let $p_1, p_2$ be primes such that $p_1 \neq p_2$.
Then points $a, b, c \in \Z^2_{p_1 \cdot p_2}$ are in a line if and only if $D(a,b,c)=0$.
\end{theorem}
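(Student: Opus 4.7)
The plan is to chain together the two preceding lemmas and reduce everything to the Chinese Remainder Theorem, exploiting the fact that the determinant $D(a,b,c)$ is a polynomial expression in the coordinates and therefore commutes with the ring projections $\phi_{p_1}$ and $\phi_{p_2}$.

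First, I would apply Lemma~\ref{L:Huizenga} with $m = p_1$ and $n = p_2$ (these are coprime since $p_1 \neq p_2$ are primes), giving that $a,b,c$ are collinear in $\Z^2_{p_1 p_2}$ if and only if $\phi_{p_1}(a), \phi_{p_1}(b), \phi_{p_1}(c)$ are collinear in $\Z^2_{p_1}$ and $\phi_{p_2}(a), \phi_{p_2}(b), \phi_{p_2}(c)$ are collinear in $\Z^2_{p_2}$. Then, since $p_1$ and $p_2$ are prime, Lemma~\ref{T:det-p} translates each of these two collinearity conditions into the vanishing of the corresponding determinant over $\Z_{p_i}$.

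The next step is to observe that $\phi_p$ is a ring homomorphism, so it commutes with the $3\times 3$ determinant:
\[
\phi_{p_i}\bigl(D(a,b,c)\bigr) \;=\; D\bigl(\phi_{p_i}(a),\phi_{p_i}(b),\phi_{p_i}(c)\bigr).
\]
Therefore the pair of conditions from the previous step becomes $D(a,b,c) \equiv 0 \pmod{p_1}$ and $D(a,b,c) \equiv 0 \pmod{p_2}$. By the Chinese Remainder Theorem, since $\gcd(p_1,p_2) = 1$, this is equivalent to $D(a,b,c) = 0$ in $\Z_{p_1 p_2}$, which completes both directions simultaneously.

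There is no real obstacle here; the only subtle point worth stating explicitly is the commutation of $\phi_{p_i}$ with $D$, which is what allows us to pass from a determinant over $\Z_{p_1 p_2}$ to determinants over the prime quotients. Everything else is a direct invocation of Lemmas~\ref{L:Huizenga} and \ref{T:det-p} together with CRT.
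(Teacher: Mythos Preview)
Your argument is correct. The forward implication ($D=0\Rightarrow$ collinear) is exactly what the paper does: push the determinant through the ring homomorphisms $\phi_{p_i}$, invoke Lemma~\ref{T:det-p} over each prime, and then assemble collinearity in $\Z^2_{p_1p_2}$ via Lemma~\ref{L:Huizenga}.

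For the reverse implication the paper takes a slightly different, more direct route: given collinear $a,b,c$, it lifts them to collinear $A,B,C\in\Z^2$ using the definition of a line as a projection from $\Z^2$, notes $D(A,B,C)=0$ in $\Z$, and then reduces mod $p_1p_2$. This avoids using the ``only if'' half of Lemma~\ref{L:Huizenga} and the CRT altogether. Your version instead runs the same chain of equivalences backwards, which is perfectly valid since both lemmas are stated as biconditionals. Your approach has the virtue of handling both directions uniformly; the paper's has the virtue that the collinear $\Rightarrow D=0$ step works over any $\Z_n$, not just squarefree $n$.
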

\begin{proof}
Assume that $D(a,b,c)=0\in\Z_{p_1 \cdot p_2}$. 
Consequently, $D(\phi_{p_i}(a), \phi_{p_i}(b), \phi_{p_i}(c))=0\in\Z_{p_i}$ and 
by Lemma~\ref{T:det-p} $\phi_{p_i}(a), \phi_{p_i}(b), \phi_{p_i}(c)$ are in a line on $\Z^2_{p_i}$ 
for $i=1,2$. By Lemma~\ref{L:Huizenga} $a,b,c$ are in a line on $Z^2_{p_1 \cdot p_2}$.

Conversely, assume that $a,b,c\in\Z^2_{p_1 \cdot p_2}$ are in a line. 
There exists $A,B,C\in\Z^2$ in a line such that $\pi(A)=a$, $\pi(B)=b$ and $\pi(C)=c$. 
Hence, $D(A,B,C)=0\in\Z$ and consequently $D(a,b,c)=0\in\Z_{p_1 \cdot p_2}$.

\end{proof}

\begin{remark}
Generally, zeroing of determinant is necessary but not sufficient for three points to be collinear.
Let $p^2|m$ for some prime $p$. Then for the points $(0,0),(p,0),(0,p)$ we have 
\[
\begin{vmatrix}
1 & 1 & 1 \\
0 & p & 0 \\
0 & 0 & p\\
\end{vmatrix}
= 0
\]
but these point are not collinear.
\end{remark}
%NOTKA: jak sie zrobi rachunek taki jak w poprzedniej pracy to wychodzi ze det jest $p^2 + const\, p^3$

\section{Arcs in $\Z^{2}_{2p}$}

\begin{theorem}
We have
$\uptau\left(\Z^{2}_{2p}\right)\leq 2p+2$ and
$\uptau\left(\Z^{2}_{2p}\right)=2p+2$ for $p=3,5.$ 
\end{theorem}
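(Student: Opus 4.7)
The proof splits into two parts: the upper bound, which follows immediately from the preceding lemmas, and the two sharp cases, which require explicit constructions.

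For the upper bound, I would invoke Lemma~\ref{L:Kurz09} with $m=2$ and $n=p$. Since $p$ is odd the pair is coprime, so
\[
\uptau(\Z^2_{2p}) \leq 2\cdot\uptau(\Z^2_p).
\]
Lemma~\ref{L:pp} then supplies $\uptau(\Z^2_p) = p+1$, giving $\uptau(\Z^2_{2p}) \leq 2p+2$. (The other option in the minimum from Lemma~\ref{L:Kurz09} is $p\cdot\uptau(\Z^2_2) = 4p$, which is weaker for every $p\geq 2$, so nothing is gained from it here.)

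For the equality $\uptau(\Z^2_{2p}) = 2p+2$ at $p=3$ and $p=5$, I would exhibit explicit arcs of the prescribed sizes, namely $8$ points in $\Z^2_6$ and $12$ points in $\Z^2_{10}$. Because $2p$ is squarefree, Theorem~\ref{T:det-squarefree} reduces the no-three-in-line condition to the single test $D(a,b,c)\neq 0$ for each triple, so once a candidate set is written down the verification is entirely mechanical.

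The main obstacle is producing the configurations themselves. For $p=3$ the ambient space has only $36$ points and a highly symmetric set---for instance one stable under a suitably chosen subgroup of the automorphism group $\mathcal{G}_6$ generated by a translation and an invertible matrix $f_A$---can be written down directly. For $p=5$ the search space is larger, and I would either restrict to sets invariant under a cyclic translation subgroup (so the problem collapses to checking a handful of orbit representatives) or run a brief backtracking enumeration of the same flavour that underlies the Gurobi computation alluded to in the introduction. In either case the outcome is a list of $2p+2$ points whose $\binom{2p+2}{3}$ determinants are then verified non-vanishing in $\Z_{2p}$.
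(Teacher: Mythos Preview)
Your argument is correct and matches the paper's proof exactly: the upper bound via Lemma~\ref{L:Kurz09} and Lemma~\ref{L:pp}, and the equalities via explicit arcs of sizes $8$ and $12$. The paper simply supplies those configurations outright (Figures~\ref{fig_ex_6} and~\ref{fig_ex_10}) rather than discussing how to search for them, so to finish your write-up you just need to list the points and, as you note, check the determinants.
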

\begin{proof}
It  
is an immediate consequence of Lemma~\ref{L:Kurz09}, Lemma~\ref{L:pp}, Figure~\ref{fig_ex_6} and Figure~\ref{fig_ex_10}.
\end{proof}
\begin{remark}
We conjecture that $p=3$ and $5$ are the only values for which the equality holds.
\end{remark}

Define the maps $\alpha_{2} :\Z_{p}\to\Z_{2p},$ and $\alpha_{p} :\Z_{2}\to\Z_{2p}$ by
\[\alpha_{2} (i+p\Z, j+p\Z) = (2i+2p\Z, 2j+2p\Z),\]
\[\alpha_{p} (i+2\Z, j+2\Z) = (pi+2p\Z, pj+2p\Z).\]

\begin{theorem} 
Let $X$ be a complete arc of $\Z^{2}_{p}$. 
Then $\alpha_2(X)$ is the complete arc of $\Z^{2}_{2p}$. 
\end{theorem}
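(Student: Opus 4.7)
My plan is to verify in turn that $\alpha_2(X)$ is (i) an arc and (ii) complete, using Lemma~\ref{L:Huizenga} in both steps to reduce collinearity in $\Z^2_{2p}$ to collinearity in the factors $\Z^2_p$ and $\Z^2_2$. The key preparatory observation is that $\alpha_2$ is injective (since $2i \equiv 2i' \pmod{2p}$ forces $i \equiv i' \pmod p$), that $\phi_p \circ \alpha_2$ equals multiplication by $2$ on $\Z^2_p$, and that $\phi_2 \circ \alpha_2$ is identically $(0,0)$. Because $p$ is odd, multiplication by $2$ is an invertible linear map on $\Z^2_p$ and so preserves collinearity.

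For (i), take any three distinct points $\alpha_2(x_1), \alpha_2(x_2), \alpha_2(x_3)$ of $\alpha_2(X)$ (the preimages $x_i\in X$ are distinct by injectivity). Their $\phi_2$-images are all $(0,0)$, so trivially collinear in $\Z^2_2$; their $\phi_p$-images $2x_1, 2x_2, 2x_3$ are collinear in $\Z^2_p$ iff $x_1, x_2, x_3$ are, which fails because $X$ is an arc. Hence by Lemma~\ref{L:Huizenga} the three points are not collinear in $\Z^2_{2p}$.

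For (ii), fix $y \in \Z^2_{2p} \setminus \alpha_2(X)$ and produce distinct $a, b \in \alpha_2(X)$ with $y, a, b$ collinear. If both coordinates of $y$ are even, write $y = \alpha_2(\tilde y)$ with $\tilde y \in \Z^2_p \setminus X$; completeness of $X$ yields distinct $x_1, x_2 \in X$ collinear with $\tilde y$, and the argument from (i) lifts the collinearity to $\Z^2_{2p}$ via Lemma~\ref{L:Huizenga}. Otherwise $y$ has an odd coordinate, and I set $\bar y := 2^{-1}\phi_p(y) \in \Z^2_p$ (valid as $p$ is odd). If $\bar y \notin X$, completeness of $X$ gives distinct $x_1, x_2 \in X$ collinear with $\bar y$; since $\phi_p(y) = 2\bar y$, the $\phi_p$-projections $2\bar y, 2x_1, 2x_2$ are collinear in $\Z^2_p$, while the $\phi_2$-projections $\phi_2(y), (0,0), (0,0)$ are collinear in $\Z^2_2$ (any two points of $\Z^2_2$ determine a line). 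If instead $\bar y = x_1 \in X$, I take any $x_2 \in X \setminus \{x_1\}$, which exists since $|X| = p+1 \geq 4$; now the $\phi_p$-projections $2x_1, 2x_1, 2x_2$ involve only two distinct points and are trivially collinear, while the $\phi_2$-side is as before. Distinctness of $y, \alpha_2(x_1), \alpha_2(x_2)$ in all cases follows from $y \notin \alpha_2(X)$ together with injectivity of $\alpha_2$.

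I expect the subtlest point to be the sub-case $\bar y \in X$ above, where completeness of $X$ supplies no direct collinear triple; the trick is to exploit the coincidence $\phi_p(y) = \phi_p(\alpha_2(x_1))$ so that the $\phi_p$-projection collapses to only two distinct values and is collinear for free. Everything else is a formal application of Lemma~\ref{L:Huizenga} together with the fact that the restriction of $\phi_p$ to $\alpha_2(\Z^2_p)$ is, up to the invertible scalar $2$, the inverse of $\alpha_2$.
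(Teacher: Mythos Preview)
Your proof is correct and follows essentially the same route as the paper's: both split on whether the new point has even or odd coordinates, lift completeness of $X$ from $\Z^2_p$ in the even case, and reduce the odd case to it. The paper works with the determinant criterion (Theorem~\ref{T:det-squarefree}) and a translation by $v\in\{[0,p],[p,0],[p,p]\}$, while you work directly with Lemma~\ref{L:Huizenga} and the projection $\phi_p$; since such a translation fixes $\phi_p$ and zeroes $\phi_2$, the paper's $t_v(c)$ is exactly your $\alpha_2(\bar y)$, so the two reductions coincide. Your explicit handling of the sub-case $\bar y\in X$ is a point the paper's argument leaves implicit.

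One minor slip: you justify the existence of $x_2\in X\setminus\{x_1\}$ by ``$|X|=p+1\ge 4$'', but a \emph{complete} arc need not have maximum size---completeness only means maximality under inclusion. This is harmless here, since all you actually use is $|X|\ge 2$, and no singleton is a complete arc.
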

\begin{proof}
Let $X$ be a complete arc of $\Z^2_p$. Then $\alpha_2(X)$ is obviously the arc of $\Z^2_{2p}$.
Let $c\in \Z^{2}_{2p}$. We will show that there are $a,b\in \alpha_2(X)$ 
such that $a,$ $b,$ $c$ are collinear.

Assume first that $c\in \phi^{-1}_2(0,0).$ Then there is $c'\in Z^{2}_{p}$ such that $\alpha_2(c')=c.$
Since  $X$ is a complete arc then there exist $a', b' \in X$ such that $a',b',c'$ 
are collinear. By Theorem~\ref{T:det-p}, $D(a',b',c')=0\in \Z_{p}.$
Then $D(\alpha_2(a'),\alpha_2(b'),c)=0\in \Z_{2p}.$
 By Theorem~\ref{T:det-squarefree}, $a=\alpha_2(a'),$ $b=\alpha_2(b'),$ $c$ are collinear.

Now assume that $c\notin \phi^{-1}_2(0,0).$ Then there is $v\in \left\{\left[0,p\right],\left[p,0\right],\left[p,p\right]\right\}$ 
such that $t_{v}(c)\in \phi^{-1}_2(0,0).$ Hence the first part of the proof shows that there are $a,b\in \alpha_2(X)$ 
such that $a$, $b$, $t_{v}(c)$ are  collinear (i.e. $D(a,b,t_{v}(c))=0$).
Because $\alpha_{2}(X)\subset \phi^{-1}_{2}(0,0),$
a straightforward calculation shows that $D(a,b,c)=D(a,b,t_{v}(c))$ for 
$a,b\in \alpha_{2}(X).$
Hence $D(a,b,c)=0$ and $a,$ $b,$ $c$ are collinear, by Theorem~\ref{T:det-squarefree}.
This completes the proof. 

\end{proof}

\begin{lemma}
Let $X$ be a complete arc of $\Z^{2}_{2}$. 
Then $\alpha_{p}(X)$ is the complete arc of $\Z^{2}_{2p}$. 
\end{lemma}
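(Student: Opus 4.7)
The plan is to mirror the structure of the preceding theorem, with the roles of the two prime factors swapped. First I would identify $X$ explicitly: in $\Z_2^2 = \mathrm{AG}(2,2)$ every line has exactly two points (since $\gcd(u,v)=1$ forces $(u \bmod 2, v \bmod 2)$ to be nonzero, so the projected line has period $2$ in the parameter $k$), so no three distinct points of $\Z_2^2$ are collinear, and hence the unique complete arc is $X = \Z_2^2$ itself. Consequently $\alpha_p(X) = \{(0,0), (p,0), (0,p), (p,p)\} = \phi_p^{-1}(0,0) \subset \Z_{2p}^2$. Note also that, because $p$ is odd, $\phi_2 \circ \alpha_p$ is the identity on $\Z_2^2$.

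Next I would show that $\alpha_p(X)$ is itself an arc. Given any three distinct points of $\alpha_p(X)$, Lemma~\ref{L:Huizenga} reduces collinearity in $\Z_{2p}^2$ to collinearity of both the $\phi_p$- and the $\phi_2$-projections. The $\phi_2$-projections are three distinct points of $X$, hence not collinear in $\Z_2^2$, and so the original three points are not collinear in $\Z_{2p}^2$.

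For completeness, let $c \in \Z_{2p}^2 \setminus \alpha_p(X)$. Since $\alpha_p(X) = \phi_p^{-1}(0,0)$, the point $c$ satisfies $\phi_p(c) \neq (0,0)$. I would set $a := \alpha_p(\phi_2(c))$, the unique point of $\alpha_p(X)$ whose $\phi_2$-image coincides with $\phi_2(c)$, and pick any $b \in \alpha_p(X) \setminus \{a\}$. Then $\phi_p(a) = \phi_p(b) = (0,0)$, so the $\phi_p$-projections of $a, b, c$ share a repeated value and thus lie on a line in $\Z_p^2$; and $\phi_2(a) = \phi_2(c)$, so the $\phi_2$-projections likewise share a repetition and lie on a line in $\Z_2^2$. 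Lemma~\ref{L:Huizenga} then delivers collinearity of $a, b, c$ in $\Z_{2p}^2$, and $c \notin \alpha_p(X)$ together with $a \neq b$ guarantees these three points are distinct.

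The main obstacle is mostly notational rather than conceptual: I need to keep track of the fiber identity $\alpha_p(X) = \phi_p^{-1}(0,0)$ and the lifting identity $\phi_2 \circ \alpha_p = \mathrm{id}_{\Z_2^2}$ (which uses $p$ odd) so that Lemma~\ref{L:Huizenga} can be invoked cleanly in both directions. Unlike the previous theorem, no explicit determinant computation or translation-based case split should be needed, because the degenerate notion of collinearity in $\Z_2^2$ (three points collinear iff not all distinct) is handled for free by pigeonholing two of the three $\phi_2$-projections together.
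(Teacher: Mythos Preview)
Your proof is correct and follows the same outline as the paper's: identify $X=\Z_2^2$, verify that $\alpha_p(X)$ is an arc, and for any outside point $c$ exhibit two points of $\alpha_p(X)$ collinear with $c$. The only difference is the tool and the choice of witnesses: the paper fixes $a=(0,0)$ and uses the determinant criterion (Theorem~\ref{T:det-squarefree}) together with a parity check on the coordinates of $c$ to select $b\in\{(p,0),(0,p),(p,p)\}$, whereas you take $a=\alpha_p(\phi_2(c))$, any $b\neq a$, and invoke Lemma~\ref{L:Huizenga} directly---a slightly cleaner route that avoids computing determinants.
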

\begin{proof}
Let $X$ be a complete arc in 
$\Z^{2}_{2}.$
Then $X=\Z^{2}_{2}$ and $\alpha_p(X)$ is obviously the arc of $\Z^2_{2p}$. 
It is easy to check that for every $c \in Z^{2}_{2p}$ there is $b\in \alpha_{p}(X)\setminus {(0,0)}$ such that $D((0,0),b,c)=0.$
By Theorem~\ref{T:det-squarefree}, $(0,0),$ $b,$ $c$ are collinear.
\end{proof}

The proof of the Theorem \ref{L:init} takes up the rest of this section.
We prepare for the proof by collecting together some useful technical results.

\begin{lemma}\label{L:auto}
Let $\sigma$ be an arbitrary  permutation of the set $\Z^{2}_{2}$. Then there is $f\in \mathcal G_{2p}$ such that 
$\phi_{2}\circ f =\sigma \circ \phi_{2}.$
\end{lemma}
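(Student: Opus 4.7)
The plan is to write any permutation $\sigma \in \mathrm{Sym}(\Z^2_2)$ as a composition $\sigma = t_w \circ L$ of a translation and a $\Z_2$-linear automorphism, and then lift each factor to $\mathcal G_{2p}$ separately.

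First I would decompose $\sigma$. Setting $w := \sigma(0,0)$ and $L := t_{-w} \circ \sigma$, the map $L$ fixes the origin and therefore permutes the three nonzero elements $(1,0), (0,1), (1,1)$ of $\Z^2_2$. Because these three vectors sum to zero in $\Z^2_2$ and any two of them form a $\Z_2$-basis, every permutation of them extends uniquely to a $\Z_2$-linear map; equivalently, $GL_2(\Z_2)$ has order $6$ and acts faithfully on the three nonzero vectors, so it realizes all $3! = 6$ permutations. Hence $L$ is induced by some $A_0 \in GL_2(\Z_2)$.

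Next I would lift each factor. Choose any $\tilde w \in \Z^2_{2p}$ with $\phi_2(\tilde w) = w$; the translation $t_{\tilde w}$ lies in $\mathcal G_{2p}$ and satisfies $\phi_2 \circ t_{\tilde w} = t_w \circ \phi_2$. For the linear factor, lift $A_0$ to a matrix $A$ over $\Z_{2p}$ by replacing each entry with its representative in $\{0,1\} \subset \Z_{2p}$. Any $2 \times 2$ matrix with $\{0,1\}$-entries has integer determinant in $\{-1,0,1\}$, and the value $0$ is excluded because $A_0$ is invertible mod $2$; hence $\det A = \pm 1$, a unit in $\Z_{2p}$. Therefore $A \in GL_2(\Z_{2p})$, the corresponding $f_A$ lies in $\mathcal G_{2p}$, and $\phi_2 \circ f_A = L \circ \phi_2$.

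Combining, $f := t_{\tilde w} \circ f_A$ is in $\mathcal G_{2p}$ and satisfies $\phi_2 \circ f = t_w \circ L \circ \phi_2 = \sigma \circ \phi_2$. I do not anticipate a serious obstacle; the argument essentially reduces to the identification $\mathrm{Sym}(\Z^2_2) = \mathrm{AGL}(2,\Z_2)$ together with the observation that $\{0,1\}$-lifts of elements of $GL_2(\Z_2)$ automatically have unit determinant in $\Z_{2p}$.
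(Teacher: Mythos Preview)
Your proof is correct, but it takes a more structural route than the paper. The paper proceeds by generators: it notes that $\mathrm{Sym}(\Z_2^2)$ is generated by the three transpositions $\sigma_1=((0,0),(0,1))$, $\sigma_2=((0,1),(1,0))$, $\sigma_3=((1,0),(1,1))$, and then exhibits explicit lifts $f_A\circ t_{[1,1]}$, $f_B$, $f_A$ in $\mathcal G_{2p}$ (with $A=\begin{pmatrix}1&0\\1&1\end{pmatrix}$, $B=\begin{pmatrix}0&1\\1&0\end{pmatrix}$) covering each transposition. You instead invoke the identification $\mathrm{Sym}(\Z_2^2)=\mathrm{AGL}_2(\Z_2)$, split $\sigma$ as a translation times a linear part, and lift each factor; the key observation that a $\{0,1\}$-lift of an element of $GL_2(\Z_2)$ has integer determinant $\pm 1$, hence is invertible over $\Z_{2p}$, is what makes the linear lift work. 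Your argument is cleaner and explains \emph{why} the lift exists (and would generalize, e.g., to $\Z_2^k$), while the paper's version has the virtue of giving a small explicit generating set for the lifted subgroup.
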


\begin{proof}
Let $f_{A},$ $f_{B}$ be linear transformations determined by matrices $A=\begin{pmatrix}
1 & 0\\
1 & 1\\
\end{pmatrix}$, $B=\begin{pmatrix}
0 & 1\\
1 & 0\\
\end{pmatrix}$, respectively. 
Note that the group of permutations of the set $\Z^{2}_{2}$ is  generated by
transpositions
$\sigma_{1}=\left({\scriptstyle (0,0), (0,1)}\right),$ 
$\sigma_{2}=\left({\scriptstyle (0,1), (1,0)}\right),$ 
$\sigma_{3}=\left({\scriptstyle (1,0), (1,1)}\right).$

One can verify by a straightforward calculation that 
$\phi_{2}\circ f_{A}\circ t_{\left[1,1\right]}=\sigma_{1}\circ \phi_{2},$ 
$\phi_{2}\circ f_{B}=\sigma_{2}\circ \phi_{2} $ and 
$\phi_{2}\circ f_{A}=\sigma_{3}\circ \phi_{2}.$
\end{proof}
\begin{lemma}\label{L:n}
Let $X\subset \Z^{2}_{2p}$ be an arc.
If $\left|\phi_2\left(X\right)\right|\leq 2$, then $|X|\leq p+1.$
\end{lemma}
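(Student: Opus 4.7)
My plan is to use the Chinese-remainder-style characterization of collinearity (Lemma~\ref{L:Huizenga}) to reduce the problem to an arc problem in $\Z^2_p$, where the bound $p+1$ is already known by Lemma~\ref{L:pp}. The hypothesis $|\phi_2(X)|\le 2$ is there precisely to trivialize the mod-$2$ half of Huizenga, leaving only the mod-$p$ part to worry about.

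Concretely, I would argue first that any three distinct points $a,b,c\in X$ project under $\phi_2$ to at most two distinct points in $\Z^2_2$, so their $\phi_2$-images always lie on a line of $\Z^2_2$ (since in any affine plane a pair of points lies on a line, and a single point is trivially collinear). By Lemma~\ref{L:Huizenga}, three distinct points of $X$ are therefore collinear in $\Z^2_{2p}$ if and only if their $\phi_p$-images are collinear in $\Z^2_p$.

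Next, I would prove that $\phi_p\!\restriction_X$ is injective whenever $|X|\ge 3$. Indeed, if $a\ne b$ in $X$ satisfied $\phi_p(a)=\phi_p(b)$, then for any third point $c\in X\setminus\{a,b\}$ the three images $\phi_p(a),\phi_p(b),\phi_p(c)$ would take at most two values in $\Z^2_p$ and hence would be collinear, contradicting the arc property of $X$ via the equivalence above. Then $\phi_p(X)$ itself must be an arc in $\Z^2_p$: three collinear points in $\phi_p(X)$ would lift through the injection to three distinct collinear points of $X$, again a contradiction. Applying Lemma~\ref{L:pp} gives $|X|=|\phi_p(X)|\le p+1$, and the cases $|X|\le 2$ are immediate.

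The only real subtlety to be careful about is the use of Lemma~\ref{L:Huizenga} in degenerate configurations where two or three of the projected points coincide; I must verify that the lemma covers this case (i.e.\ that "collinear" tolerates repetitions), which it does since one or two points always lie on a common line. No case split on the actual size of $\phi_2(X)$ is needed, nor any appeal to Lemma~\ref{L:auto}, because the argument is uniform once the mod-$2$ side is disposed of.
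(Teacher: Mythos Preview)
Your proposal is correct and follows essentially the same route as the paper: use the hypothesis to make the $\phi_2$-side of Lemma~\ref{L:Huizenga} automatic, deduce that $\phi_p$ is injective on $X$ and that $\phi_p(X)$ is an arc in $\Z^2_p$, then apply Lemma~\ref{L:pp}. The paper's proof is just a terse version of yours; your explicit justification of the injectivity of $\phi_p\!\restriction_X$ and your care with the degenerate (repeated-point) cases fill in details the paper leaves implicit.
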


\begin{proof}
The condition $\left|\phi_2\left(X\right)\right|\leq 2$ means that $\phi_2\left(X\right)$ is collinear in $\Z^{2}_{2}.$ 
Since $X$ is an arc then, by Lemma~\ref{L:Huizenga}, $\phi_p\left(X\right)\subset \Z^{2}_{p}$ is an arc and
$|\phi_p\left(X\right)|=|X|$.
By Lemma~\ref{L:pp}, $|X|\leq p+1.$
\end{proof}

\begin{lemma}\label{L:m1} 
Let $X\subset \Z^{2}_{2p}$ be an arc and $a\in X$.
If $t_{v}(a)\in X$ for some 
$v\in \left\{\left[0,p\right],\left[p,0\right],\left[p,p\right]\right\},$
then $|X|\leq p+3.$
\end{lemma}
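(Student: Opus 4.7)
The plan is to reduce the bound to Lemma~\ref{L:n} applied to the truncated set $X' := X \setminus \{a, t_v(a)\}$. Since $X'$ is a subset of the arc $X$, it is itself an arc. If we can show that $|\phi_2(X')| \leq 2$, then Lemma~\ref{L:n} yields $|X'| \leq p+1$, and consequently $|X| = |X'| + 2 \leq p+3$.

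To bound $|\phi_2(X')|$, I would expand the determinant: for any $c \in X'$,
\[
D(a, t_v(a), c) = v_x(c_y - a_y) - v_y(c_x - a_x).
\]
By Theorem~\ref{T:det-squarefree}, the non-collinearity of $a, t_v(a), c$ inside the arc $X$ is equivalent to $D(a, t_v(a), c) \neq 0$ in $\Z_{2p}$. Since $v_x, v_y \in \{0, p\}$, the right-hand side is automatically a multiple of $p$, so non-vanishing modulo $2p$ reduces to an odd-parity condition on a $\Z$-linear combination of $c_x - a_x$ and $c_y - a_y$. A short case analysis over the three choices of $v$ then shows that this parity condition forces $\phi_2(c)$ into $\Z_2^2 \setminus \{\phi_2(a), \phi_2(t_v(a))\}$, a set of exactly two elements, so $|\phi_2(X')| \leq 2$.

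One could alternatively first apply a suitable matrix automorphism from $GL_2(\Z_{2p})$ together with a translation to reduce to the canonical case $a = (0,0)$, $v = (0,p)$ (the cases $v = (p,0)$ and $v=(p,p)$ are brought into this form by the coordinate swap $B$ from Lemma~\ref{L:auto} and by $(x,y)\mapsto(x,y-x)$, respectively). In that canonical form $D(a,t_v(a),c) = -p\,c_x$, and the parity condition becomes simply ``$c_x$ is odd''. I do not foresee a real obstacle: the determinant computation is routine, and the key conceptual move is only to notice that deleting the pair $\{a, t_v(a)\}$ yields an arc whose $\phi_2$-image is squeezed into two of the four classes of $\Z_2^2$ by the very non-collinearity condition that makes $X$ an arc, at which point Lemma~\ref{L:n} closes the argument.
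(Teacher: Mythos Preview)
Your proposal is correct and follows essentially the same line as the paper's proof: both compute $D(a,t_v(a),c)=v_x(c_y-a_y)-v_y(c_x-a_x)$, observe that its vanishing modulo~$2p$ is a parity condition (since each $v_i\in\{0,p\}$), and conclude that every point of $X\setminus\{a,t_v(a)\}$ lies over a two-element subset of $\Z_2^2$, after which Lemma~\ref{L:n} gives the bound. The paper phrases this contrapositively (for $v=[0,p]$ it shows $D(a,t_v(a),b)=0$ for all $b\in\phi_2^{-1}(l_0)$ and says the other cases are similar), whereas you keep the general determinant formula and run the short case split on $v$ directly; the content is the same.
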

\begin{proof} Consider first the case that $v=[0,p].$ Let $l_0=\pi(L_{0})$ and 
$l_1=\pi(L_{1})$ denote the lines in $\Z^{2}_{2}$, 
where $L_0$ and $L_1$ are given by the equations $x=0$, $x=1$, respectively.
 Assume without loss of generality that $a\in \phi_2^{-1}\left(l_{0}\right).$
A straightforward calculation shows that $D(a,t_{[0,p]}(a),b)=0$ for all $b\in \phi_2^{-1}\left(l_{0}\right).$ 
Hence, by Theorem~\ref{T:det-squarefree}, $X\subseteq \left(\phi_2^{-1}\left(l_{1}\right)\cup\left\{a,t_{[0,p]}(a)\right\}\right).$
The same argument as in Lemma~\ref{L:n} shows that 
$\left|X\cap \phi_2^{-1}\left(l_{1}\right)\right| \leq p+1.$ 
The remaining two cases are dealt with similarly.
\end{proof}

\begin{lemma}\label{L:01}
Let $(a_{x},a_{y})\in \Z^{2}_{2p}$ and $a_{y}$  is invertible in $\Z_{2p}.$
If $a_{y}b_{x}^{1}-b_{y}^{1}a_{x}=a_{y}b_{x}^{2}-b_{y}^{1}a_{x}=a_{y}b_{x}^{3}-b_{y}^{1}a_{x}=A$
then $b^{1},$ $b^{2},$ $b^{3}$
 are on line.
\end{lemma}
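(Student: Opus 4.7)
The statement as written appears to contain a typographical error: the meaningful reading is $a_y b_x^i - a_x b_y^i = A$ for $i = 1, 2, 3$, asserting that $b^1, b^2, b^3$ all satisfy the same linear equation $a_y x - a_x y = A$. Under the literal reading (with $b_y^1$ repeated throughout), invertibility of $a_y$ immediately forces $b_x^1 = b_x^2 = b_x^3$, and the three points share an $x$-coordinate, so the conclusion is trivial. I describe the proof of the intended version.

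The plan is to verify directly that $D(b^1, b^2, b^3) = 0$ in $\Z_{2p}$ and then invoke Theorem~\ref{T:det-squarefree}, which applies since $2p$ is a product of two distinct primes.

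First, since $a_y$ is invertible, I rewrite each hypothesis as $b_x^i = a_y^{-1}(A + a_x b_y^i)$ for $i=1,2,3$. Next, I expand $D(b^1, b^2, b^3)$ along its top row: each $2 \times 2$ minor $b_x^j b_y^k - b_x^k b_y^j$, after substituting these expressions, collapses to $a_y^{-1} A (b_y^k - b_y^j)$, since the $a_x b_y^j b_y^k$ cross-terms cancel out. Combining the three minors with the alternating signs from the cofactor expansion then gives $D(b^1, b^2, b^3) = a_y^{-1} A \bigl[ (b_y^3 - b_y^2) - (b_y^3 - b_y^1) + (b_y^2 - b_y^1) \bigr] = 0$.

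There is no real obstacle here: the only content is the cancellation of cross-terms in the minors, after which collinearity is delivered for free by the squarefree determinant test (Theorem~\ref{T:det-squarefree}). The invertibility of $a_y$ is used only to solve for $b_x^i$; nothing is needed about $a_x$ or $A$.
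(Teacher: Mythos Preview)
Your proof is correct and follows essentially the same approach as the paper: both verify $D(b^{1},b^{2},b^{3})=0$ directly and then apply Theorem~\ref{T:det-squarefree}. The paper presents the computation more compactly as the row-operation identity $D(b^{1},b^{2},b^{3}) = a_{y}^{-1}\,D\bigl((A,b_{y}^{1}),(A,b_{y}^{2}),(A,b_{y}^{3})\bigr) = 0$, which amounts to the same substitution and cancellation you carry out explicitly.
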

\begin{proof}
 We have

\[
\begin{aligned}
D\left(b^{1},b^{2},b^{3}\right)&=
a_{y}^{-1}
D\Big(\left(A,b_{y}^{1}\right),
\left(A,b_{y}^{2}\right),
\left(A,b_{y}^{3}\right)\Big)=0.\\
\end{aligned}
\]
The result now follows from Theorem~\ref{T:det-squarefree}.
\end{proof}

\begin{theorem}\label{L:init}
Let $p\geq 5$ and $X\subset \Z^{2}_{2p}$ be an arc. If $|X|>p+3$, then there is $f\in \mathcal G_{2p}$ such that $(0,0),$ $(1,0),$ $(0,1)\in f(X).$
\end{theorem}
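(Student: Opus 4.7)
The plan is to locate three points $a,b,c\in X$ whose $\phi_2$-images are three distinct elements of $\Z^2_2$ and whose $\phi_p$-images are non-collinear in $\Z^2_p$, and then to compose an automorphism supplied by Lemma~\ref{L:auto}, a translation, and a linear automorphism $f_A$ with $A\in GL_2(\Z_{2p})$ so as to send the triple $(a,b,c)$ to $((0,0),(1,0),(0,1))$.

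First I would extract the structural consequences of $|X|>p+3$. The contrapositive of Lemma~\ref{L:m1} tells us that no two points of $X$ share a $\phi_p$-fibre, so $\phi_p$ is injective on $X$ and $|\phi_p(X)|=|X|>p$. The contrapositive of Lemma~\ref{L:n} gives $|\phi_2(X)|\geq 3$, so at least three of the four $\phi_2$-fibres meet $X$ non-trivially.

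The step I expect to be the main obstacle is producing the required triple. I would argue by contradiction: suppose every triple $a,b,c\in X$ with pairwise distinct $\phi_2$-images has $\phi_p$-collinear images. Pick three fibres $X_i=X\cap\phi_2^{-1}(q_i)$ ($i=1,2,3$) that meet $X$, and set $Y_i=\phi_p(X_i)$ (these are pairwise disjoint by injectivity of $\phi_p|_X$). A short case analysis --- varying one point in one fibre while holding two others fixed, and using that a line in $\Z^2_p$ is determined by two of its points --- should show that $Y_1\cup Y_2\cup Y_3$ lies on a single line $\ell\subset\Z^2_p$. If $|\phi_2(X)|=3$ this already forces $|X|\leq|\ell|=p$, contradicting $|X|>p+3$. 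If $|\phi_2(X)|=4$, I would repeat the argument with the fibre triple $\{X_1,X_2,X_4\}$ to produce a second line containing the at least two points of $Y_1\cup Y_2$; the two lines must therefore coincide, so $\phi_p(X)\subset\ell$ and once more $|X|\leq p$, a contradiction. Either way the desired triple exists.

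With the triple in hand, the remainder is mechanical. By Lemma~\ref{L:auto}, after replacing $X$ by $f_1(X)$ for a suitable $f_1\in\mathcal G_{2p}$, I may assume $\phi_2(a)=(0,0)$, $\phi_2(b)=(1,0)$, $\phi_2(c)=(0,1)$; the $\phi_p$-non-collinearity is preserved since $f_1$ descends via $\phi_p$ to an affine bijection of $\Z^2_p$. Translating by $-a$ moves $a$ to the origin. Writing $b'=b-a$, $c'=c-a$, and letting $A$ be the $2\times 2$ matrix over $\Z_{2p}$ with columns $b'$ and $c'$, the determinant $\det A=D((0,0),b',c')$ reduces to $1$ modulo $2$ (since $\phi_2(b')=(1,0)$ and $\phi_2(c')=(0,1)$) and to $D(\phi_p(a),\phi_p(b),\phi_p(c))\neq 0$ modulo $p$, so $A\in GL_2(\Z_{2p})$. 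Applying $f_{A^{-1}}$ then sends $b'\mapsto(1,0)$ and $c'\mapsto(0,1)$, finishing the proof.
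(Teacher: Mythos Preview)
Your proof is correct, and it takes a genuinely different route from the paper's. The paper's argument is more hands-on: after translating so that $(0,0)\in X$ and arranging (via Lemma~\ref{L:auto}) that some $\phi_2$-fibre meeting $X$ has at least three points, it fixes a second point $(a_x,a_y)\in\phi_2^{-1}(0,1)\cap X$ and then produces a third point $(b_x,b_y)\in\phi_2^{-1}(1,0)\cap X$ with $a_xb_y-b_xa_y$ a unit by a two-case analysis --- invoking Lemma~\ref{L:m1} when $a_y=p$ and Lemma~\ref{L:01} when $a_y$ is invertible --- and writes down the inverse matrix explicitly. Your approach instead extracts the global consequence that $\phi_p$ is injective on $X$ and then argues by contradiction that if every cross-fibre triple had collinear $\phi_p$-images, all of $\phi_p(X)$ would lie on a single line of $\Z_p^2$, forcing $|X|\le p$. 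This is more conceptual and has the pleasant feature of bypassing Lemma~\ref{L:01} entirely; the paper's approach is more constructive and pinpoints the third point directly. One small remark: your appeal to Lemma~\ref{L:auto} together with the claim that the resulting $f_1$ descends along $\phi_p$ is fine (it follows from the explicit generators in that lemma's proof), but you could avoid this detour altogether --- any three \emph{distinct} points of $\Z_2^2$ are automatically non-collinear, so once your triple is found, $D(a,b,c)$ is already a unit in $\Z_{2p}$ and you can pass straight to the translation and the linear map $f_{A^{-1}}$.
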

\begin{proof}
By Lemma~\ref{L:n} and Lemma~\ref{L:auto}  we may assume that 
$\phi_2^{-1}\left(0,0\right)\cap X \neq \emptyset,$
$\phi_2^{-1}\left(0,1\right)\cap X \neq \emptyset$ and 
$\phi_2^{-1}\left(1,0\right)\cap X\neq \emptyset.$  
Since $|X|>8$ we may assume that $|\phi_2^{-1}\left(1,0\right)\cap X|\geq 3,$ by Lemma~\ref{L:auto}.
We may also assume that $(0,0)\in X$. For if not, then we may translate $X$ by a vector $[-u,-v]$ for some $(u,v)\in\phi_2^{-1}\left(0,0\right)\cap X$.
Let $(a_{x},a_{y}) \in \phi_2^{-1}\left(0,1\right)\cap X.$ Then possibilities for $a_{y}$ are: (i) $a_{y}=p,$ 
(ii) $a_{y}$ is invertible in $\Z_{2p}.$

In the case (i), the assumption that $|X|>p+3$ and 
by Lemma~\ref{L:m1} imply that $a_{x}\neq 0.$ Hence $a_{x}b_{y}-b_{x} a_{y}\neq p$ for all 
$(b_{x},b_{y}) \in \phi_2^{-1}\left(1,0\right)\cap X.$

In the case (ii), the assumption that $|\phi_2^{-1}\left(1,0\right)\cap X|\geq 3$ and Lemma~\ref{L:01} imply that there is $(b_{x},b_{y}) \in \phi_2^{-1}\left(1,0\right)\cap X$ such that $a_{x}b_{y}-b_{x} a_{y}\neq p.$

In both cases, there is $(b_{x},b_{y})\in \phi_2^{-1}\left(1,0\right)\cap X$
such that $a_{x}b_{y}-b_{x} a_{y}$ is invertible in $\Z_{2p}.$ 
Then the map $f_{A}$ with matrix $A=\frac{1}{a_{x}\cdot b_{y}-b_{x}\cdot a_{y}}\begin{pmatrix}
b_{y} & -b_{x}\\
-a_{y} & a_{x} \\
\end{pmatrix}$ maps
\[(a_{x},a_{y})\longrightarrow (1,0),\]
\[(b_{x},b_{y})\longrightarrow (0,1),\]
\[(0,0)\longrightarrow (0,0).\]
\end{proof}

\section{Numerical results}

Computing $\uptau(\Z^2_{24})$ was quite easy. We used the mathematical programming solver 
Gurobi \cite{Gurobi}.
After expanding $12946227$ nodes ($602109089$ simplex iterations) the solver found the solution 
depicted in Figure~\ref{fig_ex_24}.
On the other hand, using the solver to compute $\uptau(\Z^2_{22})$ was not successful.

Consequently, we used a more direct approach.
For finding the value of $\uptau(\Z^2_{2p})$ we implemented backtracking search algorithm 
depicted in Figure~\ref{fig_BS}. 
Figure~\ref{fig_ex_10}, Figure~\ref{fig_ex_14} and Figure~\ref{fig_ex_22} show results of our search.
Note that it is easy to find these complete arcs.
The difficult part is showing that there are no bigger ones.

To limit searching space we made some optimizations.
Thanks to Theorem~\ref{L:init} we start with the set $(0,0),(1,0),(0,1)$ (lines 29--31).
Since the initial set is symmetric, after checking the fourth point $(x,y)$ (line 34) we can 
exclude $(y,x)$ from further search (line 35 and 36).
After choosing a point $(x,y)$ (line 3 and 4) we can also exclude 
points $t_{[p,0]}(a)$, $t_{[0,p]}(a)$ and $t_{[p,p]}(a)$ 
from further search (lines 8--11 and Lemma~\ref{L:m1}).
The program presented here is simplified, the real computations were performed in parallel.

For reference we present in Table~\ref{table_uptau} all known values 
of $\uptau(\Z^2_m)$ for nonprime $m$ (bold numbers are computed by us).
\begin{table}[!htbp]
\begin{center}
\footnotesize
\begin{tabular}{|c|ccccccccccccccc|}
 \hline
 $m$         & 4 & 6 & 8 & 9 & 10 & 12 & 14 & 15 & 16 & 18 & 20 & 21 & 22 & 24 & 25 \\
 \hline
 $\uptau(m)$ & 6 & 8 & 8 & 9 & 12 & 12 & 12 & 15 & 14 & 17 & 18 & 18 & \bfseries{18} & \bfseries{20} &20\\
 \hline
\end{tabular}
\caption{Values for $\uptau(\Z_m)$.}
\label{table_uptau}
\end{center}
\end{table}
Recall that $\uptau(\Z^2_2)=4$ and $\uptau(\Z^2_p)=p+1$ for primes $p>2$.
We also attach a table with some lower and upper bounds for non-prime $ 26\leq n\leq 40$ found by computer search or application of Lemma~\ref{L:Kurz09}.
\begin{table}[!htbp]
\begin{center}
\footnotesize
\begin{tabular}{|c|cccccccccccc|}
 \hline
 $m$         & 26    & 27    & 28    & 30    & 32    & 33    & 34    & 35    & 36    & 38    & 39    & 40  \\
 \hline
 $\uptau(m)$ & 20-28 & 20-28 & 22-32 & 23-36 & 23-35 & 23-36 & 22-36 & 26-40 & 25-36 & 23-40 & 24-42 & 26-40\\
 \hline
\end{tabular}
\caption{Bounds for $\uptau(\Z_m)$.}
\label{table_uptau2}
\end{center}
\end{table}
\subsection*{Acknowledgment}
The authors are grateful to the reviewer whose valuable suggestions resulted in a better
organized and improved paper.

%%%%%%%%%%%%%%%%%%%%%%%%%%%%%%%%%%%%%%%%%%%%%%%%%%%%%%%%%%%%%%%%
%                       Source code                            %
%%%%%%%%%%%%%%%%%%%%%%%%%%%%%%%%%%%%%%%%%%%%%%%%%%%%%%%%%%%%%%%%

\begin{figure}
\begin{lstlisting}
// GLOBALS: N - size of the torus, best_solution
void select_point(int x, int y, Torus& torus, Solution& solution) {
    torus.set_IN(x,y);
    solution.add_point(x,y);
    if(solution.size() > best_solution.size()) {
        best_solution = solution;
    }
    int x2 = (x+N/2)%N, y2 = (y+N/2)%N;
    torus.set_OUT(x,y2);
    torus.set_OUT(x2,y);
    torus.set_OUT(x2,y2);
    for(int i=0; i<solution.size()-1; ++i) {
        torus.mark_all_lines_through_two_points(x, y, 
                   solution.get(i).getx(), solution.get(i).gety());
    }			
}
void attempt(int x, int y, Torus torus, Solution solution) {
    select_point(x,y,torus,solution);
    if(solution.size()+torus.number_of_FREE()<=best_solution.size()) {
        return; // backtrack
    }
    while(torus.find_next_FREE(x,y)) {
        attempt(x,y,torus,solution);
        torus.set_OUT(x,y);
    }
}
int main() {
    Torus torus; Solution solution;
    select_point(0,0,torus,solution);
    select_point(1,0,torus,solution);
    select_point(0,1,torus,solution);
    int x = 1, y = 1;
    while(torus.find_next_FREE(x,y)) {
        attempt(x,y,torus,solution);
        torus.set_OUT(x,y); 
        torus.set_OUT(y,x);
    }
}
\end{lstlisting}
\caption{Backtracking search}
\label{fig_BS}
\end{figure}

%%%%%%%%%%%%%%%%%%%%%%%%%%%%%%%%%%%%%%%%%%%%%%%%%%%%%%%%%%%%%%%%
%                          Examples                            %
%%%%%%%%%%%%%%%%%%%%%%%%%%%%%%%%%%%%%%%%%%%%%%%%%%%%%%%%%%%%%%%%

%%%%%%%%%%%%%%%%%%%%%%%%%%%%   6   %%%%%%%%%%%%%%%%%%%%%%%%%%%%
\begin{figure}
  \begin{center}
    \setlength{\unitlength}{0.15cm}
    \begin{picture}(6,6)
      \multiput(-0.5,-0.5)(0,1){7}{\line(1,0){6}}
      \multiput(-0.5,-0.5)(1,0){7}{\line(0,1){6}}
%8:(0,0)(1,0)(0,1)(2,1)(1,2)(2,2)(5,3)(3,5)
      \put(0,0){\circle*{0.75}}
      \put(1,0){\circle*{0.75}}
      \put(0,1){\circle*{0.75}}
      \put(2,1){\circle*{0.75}}
      \put(1,2){\circle*{0.75}}
      \put(2,2){\circle*{0.75}}
      \put(5,3){\circle*{0.75}}
      \put(3,5){\circle*{0.75}}
    \end{picture}
  \end{center}
  \caption{A complete arc set of cardinality $8$ over $\Z^2_6$.}
  \label{fig_ex_6}
\end{figure}

%%%%%%%%%%%%%%%%%%%%%%%%%%%%   10   %%%%%%%%%%%%%%%%%%%%%%%%%%%%
\begin{figure}
  \begin{center}
    \setlength{\unitlength}{0.15cm}
    \begin{picture}(10,10)
      \multiput(-0.5,-0.5)(0,1){11}{\line(1,0){10}}
      \multiput(-0.5,-0.5)(1,0){11}{\line(0,1){10}}
%12:(0,0)(1,0)(0,1)(1,1)(4,2)(7,2)(5,4)(6,4)(5,7)(6,7)(4,9)(7,9)
      \put(0,0){\circle*{0.75}}
      \put(1,0){\circle*{0.75}}
      \put(0,1){\circle*{0.75}}
      \put(1,1){\circle*{0.75}}
      \put(4,2){\circle*{0.75}}
      \put(7,2){\circle*{0.75}}
      \put(5,4){\circle*{0.75}}
      \put(6,4){\circle*{0.75}}
      \put(5,7){\circle*{0.75}}
      \put(6,7){\circle*{0.75}}
      \put(4,9){\circle*{0.75}}
      \put(7,9){\circle*{0.75}}
    \end{picture}
  \end{center}
  \caption{A complete arc set of cardinality $12$ over $\Z^2_{10}$.}
  \label{fig_ex_10}
\end{figure}

%%%%%%%%%%%%%%%%%%%%%%%%%%%%   14   %%%%%%%%%%%%%%%%%%%%%%%%%%%%
\begin{figure}
  \begin{center}
    \setlength{\unitlength}{0.15cm}
    \begin{picture}(14,14)
      \multiput(-0.5,-0.5)(0,1){15}{\line(1,0){14}}
      \multiput(-0.5,-0.5)(1,0){15}{\line(0,1){14}}
%12:(0,0)(1,0)(0,1)(1,1)(3,2)(4,2)(9,3)(10,3)(3,7)(12,7)(4,8)(7,10)
      \put(0,0){\circle*{0.75}}
      \put(1,0){\circle*{0.75}}
      \put(0,1){\circle*{0.75}}
      \put(1,1){\circle*{0.75}}
      \put(3,2){\circle*{0.75}}
      \put(4,2){\circle*{0.75}}
      \put(9,3){\circle*{0.75}}
      \put(10,3){\circle*{0.75}}
      \put(3,7){\circle*{0.75}}
      \put(12,7){\circle*{0.75}}
      \put(4,8){\circle*{0.75}}
      \put(7,10){\circle*{0.75}}
    \end{picture}
  \end{center}
  \caption{A complete arc set of cardinality $12$ over $\Z^2_{14}$.}
  \label{fig_ex_14}
\end{figure}

%%%%%%%%%%%%%%%%%%%%%%%%%%%%   22   %%%%%%%%%%%%%%%%%%%%%%%%%%%%
\begin{figure}
  \begin{center}
    \setlength{\unitlength}{0.15cm}
    \begin{picture}(22,22)
      \multiput(-0.5,-0.5)(0,1){23}{\line(1,0){22}}
      \multiput(-0.5,-0.5)(1,0){23}{\line(0,1){22}}
%18:(0,0)(1,0)(0,1)(1,1)(3,2)(6,2)(2,7)(20,7)(12,10)(21,10)(2,13)(11,13)(3,16)(21,16)(19,17)(6,20)(15,21)(20,21)
      \put(0,0){\circle*{0.75}}
      \put(1,0){\circle*{0.75}}
      \put(0,1){\circle*{0.75}}
      \put(1,1){\circle*{0.75}}
      \put(3,2){\circle*{0.75}}
      \put(6,2){\circle*{0.75}}
      \put(2,7){\circle*{0.75}}
      \put(20,7){\circle*{0.75}}
      \put(12,10){\circle*{0.75}}
      \put(21,10){\circle*{0.75}}
      \put(2,13){\circle*{0.75}}
      \put(11,13){\circle*{0.75}}
      \put(3,16){\circle*{0.75}}
      \put(21,16){\circle*{0.75}}
      \put(19,17){\circle*{0.75}}
      \put(6,20){\circle*{0.75}}
      \put(15,21){\circle*{0.75}}
      \put(20,21){\circle*{0.75}}
    \end{picture}
  \end{center}
  \caption{A complete arc of cardinality $18$ over $\Z^2_{22}$.}
  \label{fig_ex_22}
\end{figure}
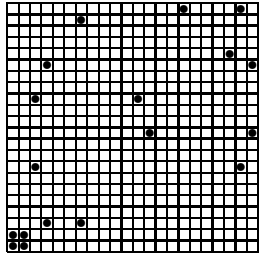

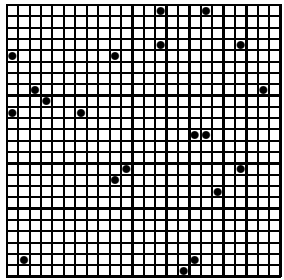
\begin{figure}
  \begin{center}
    \setlength{\unitlength}{0.15cm}
    \begin{picture}(24,24)
      \multiput(-0.5,-0.5)(0,1){25}{\line(1,0){24}}
      \multiput(-0.5,-0.5)(1,0){25}{\line(0,1){24}}
% # Objective value = 20
% x_0_14 1
\put(0,14){\circle*{0.75}}
% x_0_19 1
\put(0,19){\circle*{0.75}}
% x_2_16 1
\put(2,16){\circle*{0.75}}
% x_3_15 1
\put(3,15){\circle*{0.75}}
% x_6_14 1
\put(6,14){\circle*{0.75}}
% x_9_8 1
\put(9,8){\circle*{0.75}}
% x_9_19 1
\put(9,19){\circle*{0.75}}
% x_10_9 1
\put(10,9){\circle*{0.75}}
% x_13_20 1
\put(13,20){\circle*{0.75}}
% x_13_23 1
\put(13,23){\circle*{0.75}}
% x_14_1 1
\put(1,1){\circle*{0.75}}
% x_15_0 1
\put(15,0){\circle*{0.75}}
% x_16_1 1
\put(16,1){\circle*{0.75}}
% x_16_12 1
\put(16,12){\circle*{0.75}}
% x_17_12 1
\put(17,12){\circle*{0.75}}
% x_17_23 1
\put(17,23){\circle*{0.75}}
% x_18_7 1
\put(18,7){\circle*{0.75}}
% x_20_9 1
\put(20,9){\circle*{0.75}}
% x_20_20 1
\put(20,20){\circle*{0.75}}
% x_22_16 1
\put(22,16){\circle*{0.75}}
    \end{picture}
  \end{center}
  \caption{A complete arc of cardinality $20$ over $\Z^2_{24}$.}
  \label{fig_ex_24}
\end{figure}

%%%%%%%%%%%%%%%%%%%%%%%%%%%%%%%%%%%%%%%%%%%%%%%%%%%%%%%%%%%%%%%%
%                      Bibliography                            %
%%%%%%%%%%%%%%%%%%%%%%%%%%%%%%%%%%%%%%%%%%%%%%%%%%%%%%%%%%%%%%%%
%\newpage

\end{document}